\newtheorem{thm}{Theorem}[section]
\newtheorem{prop}{Proposition}[section]
\newtheorem{corr}{Corollary}[section]
\newtheorem{ex}{Example}[section]
\newtheorem{rem}{Remark}[section]
\numberwithin{equation}{section}
\newcommand{\N}{{\mathbb N}}
\newcommand{\C}{{\mathbb C}}
\newcommand{\BB}{{\mathbb B}}
\def\S{{\mathbb S}}             
\newcommand{\Ric}{\operatorname{Ric}}
\newcommand{\scal}{\operatorname{scal}}
\newcommand{\tr}{\operatorname{tr}}
\newcommand{\vol}{\operatorname{dvol}}
\def\T{{\mathcal T}}            
\def\V{{\mathcal V}}            
\def\Rho{{\sf P}}               
\def\J{{\sf J}}                 
\def\f{{\frac{n}{2}}}
\title{Holographic formula for $Q$-curvature. II}
\author{Andreas Juhl}
\address{Humboldt-Universit\"at, Institut f\"ur Mathematik, Unter den Linden,
D-10099 Berlin} \email{ajuhl@math.hu-berlin.de}
\begin{document}

\begin{abstract} We extend the holographic formula for
the critical $Q$-curvature in \cite{gj} to all $Q$-curvatures.
Moreover, we confirm a conjecture of \cite{juhl-book}.
\end{abstract}

\subjclass[2010]{Primary 53B20 53B30; Secondary 53A30}

\maketitle

\centerline \today

\tableofcontents

\footnotetext{The work was supported by SFB 647
``Space-Time-Matter'' of DFG.}

\section{Introduction and formulation of the main result}

The present work is a sequel to \cite{gj}, which gave a holographic
formula for Branson's critical $Q$-curvature.

The notion of $Q$-curvature was introduced by Branson in
\cite{bran}. Any Riemannian manifold $(M,g)$ of even dimension $n$
comes with a finite sequence $Q_2, Q_4, \dots, Q_n$ of
$Q$-curvatures. The quantity $Q_{2N} \in C^\infty(M)$ arises by
\begin{equation}\label{fund}
P_{2N}(g) (1) = (-1)^N \left(\f-N\right) Q_{2N}(g)
\end{equation}
through the constant term of the conformally covariant power $P_{2N}$
of the Laplacian constructed in \cite{GJMS}. $Q_{2N}$ is a curvature
invariant of order $2N$. For general metrics, the sequence $P_2, P_4,
\dots, P_n$ of GJMS-operators and the associated sequence of
$Q$-curvatures terminate at the {\em critical} GJMS-operator $P_n$ and
the {\em critical} $Q$-curvature $Q_n$, respectively. A subtle point
is that, by $P_n(g)(1)=0$, the critical $Q$-curvature $Q_n$ is not
defined by \eqref{fund}. Instead, it arises through a limiting
procedure from the subcritical $Q$-curvatures. The main feature which
distinguishes the critical $Q$-curvature from the subcritical ones is
its remarkable linear transformation property
$$
e^{n\varphi} Q_n(e^{2\varphi} g) = Q_n(g) + (-1)^\f P_n(g)(\varphi),
\; \varphi \in C^\infty(M)
$$
under conformal changes of the metric. Finally, in odd dimensions,
the sequences $P_2, P_4, \dots$ and $Q_2, Q_4, \dots$ both are
infinite. However, in that case, there is no critical $Q$-curvature.

The lowest order $Q$-curvatures are given by the explicit formulas
\begin{equation}
Q_2 = \frac{\scal}{2(n\!-\!1)} \quad \mbox{and} \quad Q_4 = \f \J^2
- 2 |\Rho|^2 - \Delta \J,
\end{equation}
where we use the notation
$$
\J = \frac{\scal}{2(n\!-\!1)} \quad \mbox{and} \quad \Rho =
\frac{1}{n\!-\!2} \left(\Ric - \J g\right)
$$
with $\Delta$ denoting the non-positive Laplacian. $\Rho$ is the
Schouten tensor. Among all $Q$-curvatures, these are by far the
mostly studied ones. For more information see \cite{BJ} and
\cite{juhl-book}.

The following theorem extends the main result of \cite{gj} to all
$Q$-curvatures.

\begin{thm}\label{main-form} Assume that $1 \le N \le \f$ for
even $n$ and $N \ge 1$ for odd $n$. Then
\begin{equation}\label{con}
4N c_N Q_{2N} = 2N v_{2N} + \sum_{j=1}^{N-1} (2N\!-\!2j)
\T_{2j}^* \left(\f\!-\!N\right) (v_{2N-2j})
\end{equation}
with $1/c_N = (-1)^N 2^{2N} N! (N\!-\!1)!$.
\end{thm}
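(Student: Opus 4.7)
The plan is to extend the argument of \cite{gj}, which establishes (1.3) at criticality $N = \f$, to all subcritical orders $N < \f$ by introducing the spectral parameter $\lambda = \f - N$ and carrying it through the holographic derivation.

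I would begin by recalling the Poincar\'e--Einstein framework underlying the objects on the right-hand side of (1.3). Over $(M,g)$ one has the geodesic normal form of the associated Poincar\'e--Einstein metric, with defining function $r$. The coefficients $v_{2j}$ are the even coefficients in the asymptotic expansion of its renormalized volume form, and the tangential operators $\T_{2j}$, together with their formal adjoints $\T_{2j}^*$, arise from the formal power-series solution of the eigenvalue equation $(-\Delta_{g_+} - s(n-s))u = 0$ with prescribed leading asymptotics $r^{n-s}$. These $\T_{2j}$ are polynomial in the spectral parameter $s$, hence polynomial in $\lambda = s-\f$; this is what gives meaning to the notation $\T_{2j}^*(\f-N)$ on the right-hand side of (1.3). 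In parallel, the formula \eqref{fund} allows us to convert any identity for $P_{2N}(g)(1)$ into an identity for $Q_{2N}$, provided $N < \f$.

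Second, I would run the argument of \cite{gj} with $\lambda$ kept generic. The key mechanism there is an integration-by-parts identity on the Poincar\'e--Einstein space that dualizes the $\T_{2j}$ appearing in the asymptotic expansion of the eigenfunction into the $\T_{2j}^*$ acting on the volume expansion; this produces an identity expressing $P_{2N}(g)(1)$ as a sum of terms of the form $\T_{2j}^*(\lambda)(v_{2N-2j})$ together with a pure $v_{2N}$ contribution. Invoking \eqref{fund} to divide through by $(-1)^N(\f-N)$ then converts this into a formula for $Q_{2N}$. The weights $(2N-2j)$ and the constant $4Nc_N$ should emerge directly from the combinatorics of the Pochhammer symbols produced by the recursion for $\T_{2j}(\lambda)$, together with the leading-order action of $\T_0$ on constants; the odd-dimensional case $N \ge 1$ is then included automatically since the derivation is algebraic in $n$.

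The principal obstacle I anticipate is controlling the off-critical analogues of the cancellations that happen automatically at $\lambda = 0$ in \cite{gj}. At criticality one uses $P_n(g)(1)=0$, so the formula for $Q_n$ comes from the coefficient of $\log r$ in the eigenfunction expansion and many $\lambda$-dependent terms drop out; off-critical, these terms are genuine and must be organized into precisely the combination on the right-hand side of (1.3). I would expect to handle this either by induction on $N$, using the recursive construction of the $\T_{2j}$, or by first establishing the conjectural holographic presentation of $\T_{2j}$ from \cite{juhl-book} advertised in the abstract and then using it to package the right-hand side of (1.3) into a closed form that matches $Q_{2N}$ term by term. The two tasks, proving (1.3) and confirming that conjecture, should in fact reinforce one another, and I would arrange the argument so that a single inductive pass delivers both.
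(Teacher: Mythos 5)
Your overall strategy---run the integration-by-parts argument of \cite{gj} with the spectral parameter kept generic, obtain a one-parameter family of identities among the quantities $\T_{2j}^*(\lambda)(v_{2N-2j})$, then specialize at $\lambda=\f-N$ and invoke \eqref{fund}---is indeed the route the paper takes: the generic-$\lambda$ identity is the master relation \eqref{master-3} of Theorem \ref{master}, proved by computing the coefficient of $\varepsilon^{\lambda+2N-n}$ in $\int_{\varepsilon<r<\delta}\Delta_{g_+}u\,\vol(g_+)$ once from the eigenvalue equation and once from Green's formula. But there is a genuine gap in how you propose to extract $Q_{2N}$ from that identity, and it stems from a false premise: the families $\T_{2j}(\lambda)$ are \emph{not} polynomial in the spectral parameter. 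By \eqref{TP} they are rational in $\lambda$ with poles contained in $\{\f-1,\dots,\f-j\}$; thus $\T_{2j}(\lambda)$ for $j<N$ is regular at $\lambda=\f-N$ (which is what gives the right-hand side of \eqref{con} meaning), while the top family $\T_{2N}(\lambda)$ has a simple pole exactly at the evaluation point. This pole is not an obstacle to be cancelled away---it is the entire mechanism by which $Q_{2N}$ enters. In the master relation the $j=N$ term carries the coefficient $2N(2\lambda-n+2N)=4N(\lambda-\f+N)$, whose zero cancels the pole, and the regularized value
$$
\lim_{\lambda\to\f-N}\,\left(\lambda\!-\!\f\!+\!N\right)\T_{2N}^*(\lambda)(1)
=-\frac{1}{2^{2N}N!(N\!-\!1)!}\,P_{2N}^*\!\left(\f\!-\!N\right)(1),
$$
combined with \eqref{property-1}, the self-adjointness of $P_{2N}$, and \eqref{fund}, is what produces the term $(-1)^{N-1}(\f-N)\,2^{-2N}(N!(N\!-\!1)!)^{-1}Q_{2N}$. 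Your plan, which treats every $\T_{2j}^*(\lambda)$ as regular and looks to \eqref{fund} only as an a posteriori conversion of a $P_{2N}(1)$-identity, has no source for the $Q_{2N}$ term at all.

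Two further corrections to the plan. The weights $2N-2j$ do not come from ``Pochhammer combinatorics of the recursion''; they arise because the coefficient $2\lambda N+2(\lambda-n+2N)j$ in the master relation equals $(\f-N)(2N-2j)$ at $\lambda=\f-N$, after which one divides the whole identity by the common factor $\f-N$. That division is legitimate only subcritically; for $N=\f$ (which the statement includes) the paper instead differentiates the master relation at $\lambda=0$ and uses $\dot{P}_n^*(0)=(-1)^{\f}Q_n$, recovering the result of \cite{gj}. Finally, the obstacles you anticipate are not real: no induction on $N$ is needed, no separate ``holographic presentation of $\T_{2j}$'' has to be established first, and the subcritical case is in fact \emph{simpler} than the critical one rather than harder---once the pole/zero cancellation above is recognized, the specialization $\lambda=\f-N$ closes the argument in one step.
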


The identities \eqref{con} express all $Q$-curvatures $Q_{2N}$ on
$M$ in terms of quantities which are associated to corresponding
Poincar\'e-Einstein metrics $g_+$ on $M \times (0,\varepsilon)$.
These are
\begin{itemize}
\item the holographic coefficients $v_{2j}$ and
\item the coefficients $\T_{2j}(n/2\!-\!N)$ of asymptotic expansions
of eigenfunctions of the Laplacian $\Delta_{g_+}$ for the eigenvalue
$N^2 \!-\!n^2/4$.
\end{itemize}
The relevant concepts will be recalled in Section \ref{basics}. The
role of geometry in one higher dimension in Theorem \ref{main-form}
motivates to refer to the formulas \eqref{con} as holographic.
Theorem \ref{main-form} (for even $n$) was formulated as Conjecture
6.9.1 in \cite{juhl-book}.

The paper is organized as follows. In Section \ref{basics}, we
recall basic concepts and establish some identities which will be
referred to as the master relations. These seem to be of independent
interest since they hold true in a wider setting. In Section
\ref{holo}, we show that the master relations for
Poincar\'e-Einstein metrics imply Theorem \ref{main-form}. For this
it suffices to combine the master relations for Poincar\'e-Einstein
metrics with the identities \eqref{property-1} and
\eqref{property-2}. This method also gives an alternative proof of
\eqref{con} in the critical case. In Section \ref{sphere}, we give
an independent proof of the master relation for the Poincar\'e
metric on the unit ball by using hypergeometric functions. As a
byproduct, it yields closed formulas for the $Q$-curvature
polynomials of round spheres.

\section{Master relations}\label{basics}

Although we shall work in the same framework as in \cite{gj}, we are
going to use the notation of \cite{juhl-book}. For the convenience
of the reader, we recall the basic definitions. For a given metric
$h$ on the manifold $M$ of even dimension $n$, let
\begin{equation}\label{PE1}
g_+ = r^{-2} (dr^2 + h_r)
\end{equation}
with
\begin{equation}\label{PE2}
h_r = h + r^2 h_{(2)} + \dots + r^{n-2} h_{(n-2)} + r^n (h_{(n)}+
\log r \bar{h}_{(n)}) + \cdots
\end{equation}
be a metric on $M \times (0,\varepsilon)$ so that the tensor
$\Ric(g_+) + n g_+$ satisfies the asymptotic Einstein condition
\begin{equation}\label{einstein}
\Ric(g_+) + n g_+ = O(r^{n-2})
\end{equation}
together with a certain vanishing trace condition. These conditions
uniquely determine the coefficients $h_{(2)}, \dots, h_{(n-2)}$.
They are given as polynomial formulas in terms of $h$, its inverse,
the curvature tensor of $h$, and its covariant derivatives. The
coefficient $\bar{h}_{(n)}$ and the quantity $\tr(h_{(n)})$ are
determined as well (here and in the following, traces are meant with
respect to $h$). Moreover, $\bar{h}_{(n)}$ is trace-free, and the
trace-free part of $h_{(n)}$ is undetermined. A metric $g_+$ with
these properties is called a Poincar\'e-Einstein metric with
conformal infinity $[h]$. For odd $n$, the condition
\eqref{einstein} can be satisfied to all orders by a formal series
$h_r = h + r^2 h_{(2)} + r^4 h_{(4)} + \cdots$ with {\em even}
powers, the coefficients of which are uniquely determined by $h$.
For full details see \cite{FG-final}.

The volume form of $g_+$ has the form
$$
\vol(g_+) = r^{-n-1} v(r) dr \vol(h),
$$
where $v(r) = \vol(h_r)/\vol(h)$. The coefficients in the Taylor
series
$$
v(r) = 1 + v_2 r^2 + v_4 r^4 + \cdots
$$
are known as {\em renormalized volume} coefficients (\cite{G-vol},
\cite{G-ext}) or {\em holographic} coefficients (\cite{juhl-book},
\cite{BJ}). We set $v_0=1$. For even $n$, the coefficients $v_{2j}
\in C^\infty(M)$, $j=1,\dots,\f$ are uniquely determined by $h$ and
are given by respective local formulas which involves at most $2j$
derivatives of the metric $h$. For odd $n$, all elements in the
corresponding infinite sequence $v_0, v_2, v_4, \dots$ are uniquely
determined by $h$.

Now we consider asymptotic expansions of eigenfunctions of the
Laplace-Beltrami operator of $g_+$, i.e., solutions of the equation
\begin{equation}\label{eigen}
-\Delta_{g_+} u = \lambda(n-\lambda)u, \; \lambda \in \C.
\end{equation}
For given $f \in C^\infty(M)$, we consider formal solutions of
\eqref{eigen} in form of power series
\begin{equation}\label{expansion}
r^\lambda a_0 + \sum_{N \ge 1} r^{\lambda+N} a_N \quad \mbox{with}
\quad  a_0 = f.
\end{equation}
It turns out that, for even $n$, the coefficients $a_N(h;\lambda)$
with odd $N \le n-1$ vanish, and that the coefficients
$a_2(h;\lambda), \dots, a_{n-2}(h;\lambda)$ with even indices are
uniquely determined by $a_0 = f$. They are given by respective
differential operators $\T_{2N}(h;\lambda)$ acting on $f$. The
operators $\T_{2N}(h;\lambda)$ are natural in the metric $h$ and
rational in $\lambda$. More precisely, any $\T_{2N}(h;\lambda)$ can be
written in the form
\begin{equation}\label{TP}
\T_{2N}(h;\lambda) = \frac{1}{2^{2N} N! (\f\!-\!\lambda\!-\!1)
\cdots (\f\!-\!\lambda\!-\!N)} P_{2N}(h;\lambda)
\end{equation}
with a polynomial (in $\lambda$) family $P_{2N}(h;\lambda) =
\Delta_h^N + LOT$; we recall that $\Delta$ denotes the non-positive
Laplacian. Note that \eqref{TP} shows that the poles of
$\T_{2N}(\lambda)$ are contained in the set
$$
\left\{\f-1,\dots,\f-N\right\}.
$$
Similarly, for odd $n$, the infinite sequence $\T_2(h;\lambda),
\T_4(h;\lambda), \dots$ is uniquely determined. Now for special
parameters $\lambda$, the families $P_{2N}(h;\lambda)$ contain the
GJMS-operators of $h$. In fact, for even $n$ and $1 \le N \le \f$,
we have the important relations
\begin{equation}\label{property-1}
P_{2N}\left(h;\f\!-\!N\right) = P_{2N}(h).
\end{equation}
The same relations hold true for odd $n$ and all $N \ge 1$. For details
see \cite{GZ}.

\begin{thm}\label{master} Assume that $1 \le N \le \f$ for even $n$
and $N \ge 1$ for odd $n$. Then the identities
\begin{equation}\label{master-3}
\lambda N \sum_{j=0}^N \T_{2j}^*(\lambda)(v_{2N-2j}) +
(\lambda\!-\!n\!+\!2N) \sum_{j=0}^N j \T_{2j}^*(\lambda)(v_{2N-2j}) = 0
\end{equation}
hold true as identities of rational functions.
\end{thm}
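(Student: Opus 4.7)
My approach is to integrate the formal eigenfunction equation for $\Delta_{g_+}$ against the holographic volume form over $M$, reduce it to an ordinary differential equation in $r$, and read off the master relation by matching coefficients of $r$. Fix $f \in C^\infty(M)$ (compactly supported if $M$ is not closed) and consider the formal solution
$$u(r,x) = r^\lambda \sum_{j \ge 0} r^{2j} \T_{2j}(\lambda)(f)(x)$$
of $\Delta_{g_+} u = \lambda(\lambda-n) u$, which holds to all orders in $r$ by construction of the $\T_{2j}(\lambda)$. A direct calculation from $g_+ = r^{-2}(dr^2 + h_r)$ and $\sqrt{|g_+|} = r^{-n-1} v \sqrt{|h|}$ gives
$$v \, \Delta_{g_+} u = r^2 \partial_r\bigl(v \, \partial_r u\bigr) - (n-1) r v \, \partial_r u + r^2 v \, \Delta_{h_r} u.$$

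The key simplification comes from integrating this identity over $M$ against $\vol(h)$: the tangential term contributes nothing, since
$$\int_M v \, \Delta_{h_r} u \, \vol(h) = \int_M \Delta_{h_r} u \, \vol(h_r) = 0$$
by the divergence theorem applied to $h_r$ on $M$. Setting $G(r) := \int_M u v \, \vol(h)$ and $F(r) := \int_M v \, \partial_r u \, \vol(h)$, the eigenvalue equation collapses to the single ODE
$$r^2 F'(r) - (n-1) r F(r) = \lambda(\lambda - n) G(r).$$

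Substituting the formal expansions of $u$ and of $v(r,x) = \sum_k r^{2k} v_{2k}(x)$, and transferring each $\T_{2j}(\lambda)$ to its adjoint via integration by parts on $M$, one finds
$$G(r) = r^\lambda \sum_{N \ge 0} r^{2N} \int_M f \, S_N \, \vol(h), \qquad F(r) = r^{\lambda-1} \sum_{N \ge 0} r^{2N} \int_M f \, (\lambda S_N + 2 S_N^{(1)}) \, \vol(h),$$
where $S_N := \sum_{j=0}^N \T_{2j}^*(\lambda)(v_{2N-2j})$ and $S_N^{(1)} := \sum_{j=0}^N j \, \T_{2j}^*(\lambda)(v_{2N-2j})$. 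Matching the coefficient of $r^{\lambda+2N}$ in the ODE gives
$$(\lambda - n + 2N)(\lambda S_N + 2 S_N^{(1)}) - \lambda(\lambda - n) S_N = 2\bigl[\lambda N S_N + (\lambda - n + 2N) S_N^{(1)}\bigr]$$
integrated against $f$, which must vanish. Since $f$ is arbitrary, the integrand vanishes pointwise, yielding \eqref{master-3} after dividing by $2$; rationality in $\lambda$ of both sides then extends the identity to all $\lambda$ as claimed.

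The computation itself is elementary once the key structural insight is in place: the tangential Laplacian integrates to zero because $v \, \vol(h) = \vol(h_r)$, reducing a two-variable problem to a single ODE in $r$ whose coefficient-wise expansion is exactly the asserted identity. Notably, the Einstein condition on $g_+$ plays no role in this argument, which is consistent with the authors' remark that the master relations hold in a wider setting. The main care required is in the final arithmetic, where the cancellation $\lambda(\lambda-n+2N) - \lambda(\lambda-n) = 2\lambda N$ combines with the factor $(\lambda - n + 2N)$ multiplying $2 S_N^{(1)}$ to produce the claimed pairing exactly.
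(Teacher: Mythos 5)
Your proof is correct and is essentially the paper's argument: both integrate the formal eigenequation over $M$ against $v\,\vol(h)$ so that the tangential Laplacian drops out, then compare radial expansions and pass to adjoints. The only (cosmetic) difference is that you match coefficients of $r^{\lambda+2N}$ in the resulting radial ODE directly, whereas the paper first integrates over the collar $\varepsilon<r<\delta$, applies Green's formula, and extracts the coefficient of $\varepsilon^{\lambda+2N-n}$ --- which is the same computation after one antiderivative in $r$.
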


The identities \eqref{master-3} will be called the {\em master
relations}. A calculation shows that the master relations are
equivalent to the identities
\begin{equation}\label{master-2}
(\lambda\!-\!n\!+\!2N) \sum_{j=0}^N (2N\!+\!2j)
\T_{2j}^*(\lambda)(v_{2N-2j}) = -2N(n\!-\!2N) \sum_{j=0}^N
\T_{2j}^*(\lambda)(v_{2N-2j})
\end{equation}
of rational functions. In terms of the polynomials
\begin{equation}\label{Q-pol}
Q_{2N}^{res}(\lambda) = -2^{2N} N! \left(
\lambda\!+\!\f\!-\!2N\!+\!1 \right) \cdots \left(
\lambda\!+\!\f\!-\!N \right) \sum_{j=0}^N
\T_{2j}^*(\lambda\!+\!n\!-\!2N)(v_{2N-2j})
\end{equation}
and\footnote{In \cite{juhl-book} and \cite{juhl-power}, the
definition of $\V_{2N}(\lambda)$ involves a shift of the argument.}
\begin{equation}\label{V-pol-def}
\V_{2N}(\lambda) = \left(\lambda\!+\!\f\!-\!2N\!+\!1\right) \cdots
\left(\lambda\!+\!\f\!-\!N\right) \sum_{j=0}^N (2N\!+\!2j)
\T^*_{2j}(\lambda\!+\!n\!-\!2N)(v_{2N-2j})
\end{equation}
the relations \eqref{master-2}, in turn, can be formulated as the
identities
\begin{equation}\label{master-1}
2^{2N-2} (N\!-\!1)! \lambda \V_{2N}(\lambda) = \left(\f\!-\!N\right)
Q_{2N}^{res}(\lambda)
\end{equation}
of polynomials. These alternative versions will be referred to as
master relations, too. The polynomials $Q_{2N}^{res}(\lambda)$ were
introduced in \cite{juhl-book}. They are called the $Q$-curvature
polynomials. The identity \eqref{master-1} first appeared in Section
6 of \cite{Q8}.

We continue with the

\begin{proof}[Proof of Theorem \ref{master}] First assume that
$n$ is even. We choose $N$ as stated. For $f \in C_0^\infty(M)$ and
$\lambda \not\in \left\{\f-1,\dots,0\right\}$ we set
$$
u = r^\lambda f + \T_2(\lambda)(f) r^{\lambda+2} + \dots +
\T_n(\lambda)(f) r^{\lambda+n}.
$$
Then
\begin{equation}\label{eigen-app}
\Delta_{g_+} u  + \lambda(n\!-\!\lambda) u \in O(r^{\lambda+n+1}).
\end{equation}
We choose small numbers $\varepsilon$ and $\delta$ so that $0 <
\varepsilon < \delta$ and consider the asymptotic expansion of the
integral
\begin{equation}\label{integral}
\int_{\varepsilon < r < \delta} \Delta_{g_+} u \vol(g_+)
\end{equation}
for $\varepsilon \to 0$. Assuming that $\lambda + 2N-n \ne 0$, we
determine the coefficient of $\varepsilon^{\lambda+2N-n}$ in this
expansion in two different ways. On the one hand, we evaluate the
integral
$$
\int_{\varepsilon < r < \delta} u \vol(g_+) =
\int_\varepsilon^\delta \int_M u v(r) r^{-n-1} dr \vol(h)
$$
by plugging in the asymptotic expansions of $u$ and $v(r)$. This
yields the contribution
$$
-\frac{\varepsilon^{\lambda+2N-n}}{\lambda\!+\!2N\!-\!n} \left(
\sum_{j=0}^N \int_M \T_{2j}(\lambda)(f) v_{2N-2j} \vol(h) \right).
$$
On the other hand, Green's formula shows that
$$
-\int_{\varepsilon < r < \delta} \Delta_{g_+} u \vol(g_+) = \left(
\int_{r = \varepsilon} + \int_{r = \delta} \right) \frac{\partial
u}{\partial \nu} r^{-n} v(r) \vol(h),
$$
where $\nu$ denotes the inward unit normal (with respect to $g_+$).
The coefficient of $\varepsilon^{\lambda+2N-n}$ in the asymptotic of
this integral is given by
$$
\sum_{j=0}^N (\lambda\!+\!2j) \int_M \T_{2j}(\lambda)(f) v_{2N-2j}
\vol(h).
$$
Now \eqref{eigen-app} implies that
$$
-\frac{\lambda(n\!-\!\lambda)}{\lambda\!+\!2N\!-\!n} \sum_{j=0}^N
\int_M \T_{2j}(\lambda)(f) v_{2N-2j} \vol(h) = \sum_{j=0}^N
(\lambda\!+\!2j) \int_M \T_{2j}(\lambda)(f) v_{2N-2j} \vol(h)
$$
for all $f \in C_0^\infty(M)$. We apply partial integration and find
the pointwise identity
$$
-\frac{\lambda(n\!-\!\lambda)}{\lambda\!+\!2N\!-\!n}
\sum_{j=0}^N \T_{2j}^*(\lambda)(v_{2N-2j}) =
\sum_{j=0}^N (\lambda\!+\!2j) \T_{2j}^*(\lambda)(v_{2N-2j}).
$$
The latter equation is equivalent to \eqref{master-1}. Finally, let
$n$ be odd and $N \ge 1$. For $f \in C_0^\infty(M)$ and $\lambda
\not\in \f- \N$ we set
$$
u = r^\lambda f + \T_2(\lambda)(f) r^{\lambda+2} + \dots +
\T_{2N}(\lambda)(f) r^{\lambda+2N}.
$$
The assertion follows by analogous consideration of the coefficient
of $\varepsilon^{\lambda+2N-n}$ in the asymptotic expansion of
\eqref{integral}.
\end{proof}

As an illustration, we make explicit the master relations for $N=1$
and $N=2$ for Poincar\'e-Einstein metrics.

\begin{ex}\label{m-2} For $N=1$, \eqref{master-3} reads
\begin{equation}\label{m-1}
\lambda \left( v_2 + \T_2^*(\lambda)(1) \right) +
(\lambda\!-\!n\!+\!2) \T_2^*(\lambda)(1) = 0.
\end{equation}
It is is straightforward to verify \eqref{m-1} by using
\begin{equation}\label{v2}
v_2 = - \frac{1}{2} \J
\end{equation}
and
\begin{equation}\label{T2}
\T_2(\lambda) = \frac{1}{2(n\!-\!2\!-\!2\lambda)} (\Delta - \lambda
\J).
\end{equation}
\end{ex}

\begin{ex}\label{m-4} For $N=2$, we have the explicit formulas
\begin{equation}\label{v4}
v_4 = \frac{1}{8} (\J^2 - |\Rho|^2)
\end{equation}
and
\begin{multline}\label{T4}
\T_4(\lambda) =
\frac{1}{8(n\!-\!2\!-\!2\lambda)(n\!-\!4\!-\!2\lambda)} \big[
(\Delta - (\lambda\!+\!2)\J)(\Delta - \lambda \J) \\ +
\lambda(2\lambda\!-\!n\!+\!2) |\Rho|^2 + 2(2\lambda\!-\!n\!+\!2)
\delta (\Rho d) + (2\lambda\!-\!n\!+\!2) (d\J,d) \big]
\end{multline}
(see \cite{juhl-book} for details). Direct calculations show that
\begin{multline*}
8 \T_4^*(\lambda)(1) + 6 \T_2^*(\lambda)(v_2) + 4 v_4 \\
= \left(\f\!-\!2\right)
\frac{1}{(n\!-\!2\!-\!2\lambda)(n\!-\!4\!-\!2\lambda)} \left[
\lambda (2|\Rho|^2 \!-\! \J^2) + (n\!-\!2) (\J^2 \!-\! |\Rho|^2) -
\Delta \J \right]
\end{multline*}
and
\begin{multline*}
\T_4^*(\lambda)(1) + \T_2^*(\lambda)(v_2) + v_4 \\
= - \frac{1}{8} (\lambda\!-\!n\!+\!4)
\frac{1}{(n\!-\!2\!-\!2\lambda)(n\!-\!4\!-\!2\lambda)} \left[
\lambda (2|\Rho|^2 \!-\! \J^2) + (n\!-\!2) (\J^2 \!-\! |\Rho|^2) -
\Delta \J \right].
\end{multline*}
These results confirm \eqref{master-2}.
\end{ex}

\begin{rem}\label{AH} The proof of Theorem \ref{master} does not utilize
the property that $g_+$ is Einstein. In fact, the master relations
remain true for metrics $g_+$ of the form
$$
g_+ = r^{-2}(dr^2 + h_r)
$$
with even one-parameter families $h_r$. In particular, for $h_r = h
+ r^2 h_{(2)} + r^4 h_{(4)} + \dots$, the results of Example
\ref{m-2} and Example \ref{m-4} extend if $v_2$ and $v_4$ are
replaced by
$$
v_2 = - \frac{1}{2} \tr (h_{(2)}) \quad \mbox{and} \quad v_4 =
\frac{1}{2} \tr (h_{(4)}) - \frac{1}{4} \tr (h_{(2)}^2) +
\frac{1}{8} \tr (h_{(2)})^2,
$$
and $\J$ and $|\Rho|^2$ are replaced by $\tr (h_{(2)})$ and $-4 \tr
(h_{(4)}) + 2 \tr (h_{(2)}^2)$.
\end{rem}

Some special cases of the master relations are of particular
interest. First of all, for $\lambda=0$, the master relation
\eqref{master-1} states that
\begin{equation}\label{Q-van}
Q_{2N}^{res}(0) = 0.
\end{equation}
An independent proof of this result can be found in \cite{BJ}
(Theorem 1.6.6).

The case $\lambda = \f-N$ will be considered in Section \ref{holo}.
It leads to holographic formulas for $Q$-curvature.

In the critical case $2N=n$, Theorem \ref{master} implies the
following vanishing result. We recall that
\begin{equation}
\V_n(\lambda) = \left[\left(\lambda\!-\!\f\!+\!1\right) \cdots
\lambda\right] \sum_{j=0}^\f (n\!+\!2j)
\T^*_{2j}(\lambda)(v_{n-2j})
\end{equation}
by \eqref{V-pol-def}.

\begin{thm}\label{V-van} $\V_n(\lambda) = 0$.
\end{thm}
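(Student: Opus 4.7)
The plan is to deduce $\V_n(\lambda) = 0$ directly from the master relation in its polynomial form \eqref{master-1}, specialized to the critical value $N = \f$. By Theorem \ref{master} and the equivalence explained after it, \eqref{master-1} is a \emph{genuine} polynomial identity in $\lambda$ (with coefficients in $C^\infty(M)$): the prefactors in \eqref{Q-pol} and \eqref{V-pol-def} precisely cancel the potential simple poles of the $\T_{2j}^*(\lambda + n - 2N)$ listed in \eqref{TP}, so both $Q_{2N}^{res}(\lambda)$ and $\V_{2N}(\lambda)$ are polynomials in $\lambda$ for every $1 \le N \le \f$.

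Next I would set $N = \f$ in \eqref{master-1}. The scalar factor $(\f - N)$ on the right-hand side vanishes identically, so the identity reduces to
$$
2^{n-2}\left(\f - 1\right)!\, \lambda\, \V_n(\lambda) = 0
$$
as polynomials in $\lambda$ with values in $C^\infty(M)$. The constant $2^{n-2}(\f-1)!$ is nonzero, and $\lambda$ is a non-zero-divisor in $C^\infty(M)[\lambda]$ (pointwise, $\R[\lambda]$ is an integral domain). Cancelling these two factors gives $\V_n(\lambda) \equiv 0$, which is the claim.

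There is essentially no obstacle: once one has the polynomial version \eqref{master-1} of the master relation, the critical vanishing is a formal consequence of the factor $(\f - N)$ annihilating the right-hand side when $2N = n$. The only point that deserves care is the legitimacy of cancelling $\lambda$, which is guaranteed by the polynomial (rather than merely rational) nature of \eqref{master-1}; this in turn rests on the pole-cancellation built into the definitions \eqref{Q-pol} and \eqref{V-pol-def}. All of the substantive work has already been carried out in the proof of Theorem \ref{master}.
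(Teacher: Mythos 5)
Your proof is correct and is essentially the paper's own argument: Theorem \ref{V-van} is presented there as an immediate consequence of the polynomial master relation \eqref{master-1} specialized to $N=\f$, where the factor $\f-N$ annihilates the right-hand side and the remaining factor $\lambda$ is cancelled from the polynomial identity $\lambda\,\V_n(\lambda)=0$. Your additional remark that the prefactors in \eqref{Q-pol} and \eqref{V-pol-def} cancel the (simple) poles of $\T_{2j}^*$, so that \eqref{master-1} really is a polynomial identity, is exactly the point the paper relies on when it asserts that both $Q_{2N}^{res}(\lambda)$ and $\V_{2N}(\lambda)$ are polynomials of degree $\le N$.
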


Theorem \ref{V-van} confirms Conjecture 6.11.2 in \cite{juhl-book}
and shows that the assumption in Theorem 6.11.15 is vacuous.

By the definitions, both polynomials $Q_{2N}^{res}(\lambda)$ and
$\V_{2N}(\lambda)$ have degree $\le N$. However, the master relation
\eqref{master-1} implies that $\V_{2N}$, in fact, has only degree $\le
N-1$. The latter fact was proved in \cite{Q8}, where it plays a
central role in the proof of a universal recursive formula for the
$Q$-curvature $Q_8$.

Since \eqref{master-3} takes the form
$$
2 N \left(2\lambda\!-\!n\!+\!2N\right) \T_{2N}^*(\lambda)(v_0) +
\dots + 2\lambda N v_{2N} = 0,
$$
it implies a formula for $P_{2N}^*(\lambda)(1)$ as a linear
combination (with coefficients depending on $\lambda$) of
$$
P_{2j}^*(\lambda)(v_{2N-2j}) \quad \mbox{for $j=0,\dots,N-1$}.
$$
In the critical case, this observation was noticed and used in
\cite{juhl-book} for low orders.

\section{Holographic formulas for $Q$-curvatures}\label{holo}

In the present section, we prove Theorem \ref{main-form}. We restate
the result in the following form.

\begin{thm}\label{main} Assume that $1 \le N \le \f$ for even $n$
and $N \ge 1$ for odd $n$. Then
\begin{equation}\label{Q-sub-holo}
4N c_N Q_{2N} = \sum_{j=0}^{N-1} (2N\!-\!2j)
\T_{2j}^*\left(\f\!-\!N\right)(v_{2N-2j})
\end{equation}
with
$$
c_N = (-1)^{N} \frac{1}{2^{2N}N!(N\!-\!1)!}.
$$
\end{thm}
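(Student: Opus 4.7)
The plan is to derive Theorem \ref{main} as a direct consequence of the master relation \eqref{master-3} evaluated at the special parameter $\lambda = \f-N$. At that value one has $\lambda - n + 2N = -(\f-N) = -\lambda$, so in the combination $\lambda N + (\lambda - n + 2N)j$ the two $\lambda$-dependences collapse to $\lambda(N-j)$. Thus, \emph{formally}, the master relation at $\lambda = \f-N$ reduces to the identity
\begin{equation*}
(\f-N)\sum_{j=0}^{N}(N-j)\,\T_{2j}^*(\f-N)(v_{2N-2j}) = 0.
\end{equation*}
The point is that this is not a trivial statement: the $j=N$ term $\T_{2N}^*(\lambda)(1)$ has a simple pole at $\lambda = \f-N$, visible from \eqref{TP} through the vanishing factor $(\f-\lambda-N)$, and the coefficient $\lambda N + (\lambda-n+2N)N = 2N(\lambda - (\f-N))$ has a matching simple zero. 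The cancelled contribution is a finite, nontrivial quantity which is precisely what produces the $Q$-curvature.

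The key step is to compute this residual term. Using \eqref{TP} one writes
\begin{equation*}
(\lambda - (\f-N))\,\T_{2N}^*(\lambda) = -\frac{P_{2N}^*(\lambda)}{2^{2N}N!\prod_{k=1}^{N-1}(\f-\lambda-k)},
\end{equation*}
and evaluation at $\lambda = \f-N$ gives the denominator $2^{2N}N!(N-1)!$. Since $P_{2N}$ is formally self-adjoint and, by \eqref{property-1}, $P_{2N}(\f-N) = P_{2N}$, the relation \eqref{fund} converts $P_{2N}^*(\f-N)(1) = P_{2N}(1)$ into $(-1)^N(\f-N)Q_{2N}$. Therefore the $j=N$ contribution to the master relation at $\lambda = \f-N$ is exactly
\begin{equation*}
-\frac{2N(-1)^N(\f-N)Q_{2N}}{2^{2N}N!(N-1)!}.
\end{equation*}

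Combining with the finite contributions for $j \le N-1$ (where the coefficient is simply $\lambda(N-j) = (\f-N)(N-j)$ and no pole appears, since $\T_{2j}$ only has poles at $\f-1,\dots,\f-j$), the master relation becomes
\begin{equation*}
(\f-N)\sum_{j=0}^{N-1}(N-j)\,\T_{2j}^*(\f-N)(v_{2N-2j}) = \frac{2N(-1)^N(\f-N)Q_{2N}}{2^{2N}N!(N-1)!}.
\end{equation*}
In the subcritical range $N < \f$ one divides by $\f - N$ and multiplies by $2$ to obtain \eqref{Q-sub-holo}, with the prefactor on the right collapsing exactly to $4Nc_N$ for the stated $c_N$.

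The main obstacle is the critical case $N = \f$, where the division by $\f-N$ is not allowed — indeed, at $\lambda = 0$ both sides of the polynomial master relation \eqref{master-1} vanish identically, and $Q_{2N}^{res}(0)=0$ by \eqref{Q-van}. To handle this, I would extract the next-order information: differentiate \eqref{master-1} in $\lambda$ at $\lambda = 0$, so that the simple zero of $(\f-N)$ on the right is replaced by its derivative (producing a finite contribution), while the $\lambda$-factor on the left is likewise consumed; the residual calculation for the $j=\f$ term proceeds identically to the subcritical analysis, with $Q_n$ now entering through the limiting definition. Alternatively, the critical formula is already the main result of \cite{gj}, so one may simply cite it; the present method gives an independent derivation by this derivative-at-$\lambda=0$ device.
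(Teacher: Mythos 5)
Your argument for the subcritical case ($N<\f$) and for odd $n$ is correct and is essentially the paper's own proof: evaluate the master relation at $\lambda=\f-N$, observe that the simple zero of the coefficient $N(2\lambda-n+2N)$ cancels the simple pole of $\T_{2N}^*(\lambda)$ coming from \eqref{TP}, compute the residual value via $P_{2N}^*(\f-N)(1)=P_{2N}(1)=(-1)^N(\f-N)Q_{2N}$, and divide by $\f-N$. The constants all check out.

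The weak point is the critical case $N=\f$. Your claim that after differentiating at $\lambda=0$ ``the residual calculation for the $j=\f$ term proceeds identically to the subcritical analysis'' is not accurate, for two reasons. First, in the critical case $\lambda-n+2N=\lambda$, so the coefficient of $\T_{2j}^*(\lambda)$ in \eqref{master-3} is $\lambda(\f+j)$; differentiating at $\lambda=0$ therefore produces the sum
\begin{equation*}
\sum_{j=0}^{\f-1}(n\!+\!2j)\,\T_{2j}^*(0)(v_{n-2j}),
\end{equation*}
with coefficients $n+2j$ rather than the $n-2j$ appearing in \eqref{holo-crit}; converting one into the other requires the additional identity
$n(\dot P_n^*(0)-\dot P_n(0))(1)=2^n(\f)!(\f-1)!\sum_{j=0}^{\f-1}2j\,\T_{2j}^*(0)(v_{n-2j})$
(Proposition 2 of \cite{gj}), which is not a consequence of the master relation. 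Second, since $P_n(1)=0$, the $j=\f$ term contributes through $\dot P_n^*(0)(1)$ rather than $P_n^*(0)(1)$, and $Q_n$ enters via the Graham--Zworski relation \eqref{property-2}, not via \eqref{fund} and \eqref{property-1} as in the subcritical computation. So the critical case genuinely needs two external inputs beyond your subcritical scheme. Your fallback of simply citing \cite{gj} for the critical formula is legitimate (that is indeed the main result there), and with that citation your proof is complete; but the sketched ``identical'' derivative argument, as stated, would not close.
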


\begin{proof} We write \eqref{master-3} in the form
\begin{equation}\label{master-split}
\sum_{j=0}^{N-1} (2\lambda N + 2(\lambda\!-\!n\!+\!2N)j)
\T_{2j}^*(\lambda)(v_{2N-2j}) + 2N (2\lambda\!-\!n\!+\!2N)
\T_{2N}^*(\lambda)(1) = 0.
\end{equation}
The families $\T_{2j}(\lambda)$ for $j=0,\dots,N-1$ are regular at
$\lambda=\f-N$. Moreover, the relation
$$
\left(\lambda\!-\!\f\!+\!N\right) \T_{2N}^*(\lambda) = -
\frac{1}{2^{2N}N!}
\frac{1}{(\f\!-\!\lambda\!-\!1)\cdots(\f\!-\!\lambda\!-\!N\!+\!1)}
P_{2N}^*(\lambda)
$$
shows that the product $\left(\lambda\!-\!\f\!+\!N\right)
\T_{2N}^*(\lambda)$ is regular at $\lambda=\f-N$. By
\eqref{property-1} and the self-adjointness of $P_{2N}$, we have
$$
P_{2N}^*\left(\f\!-\!N\right)(1) = P_{2N}^*(1) = P_{2N}(1) = (-1)^N
\left(\f\!-\!N\right) Q_{2N}.
$$
It follows that the value of $\left(\lambda\!-\!\f\!+\!N\right)
\T_{2N}^*(\lambda)(1)$ at $\lambda=\f-N$ equals
$$
-\frac{1}{2^{2N}N!(N\!-\!1)!} P_{2N}^*\left(\f\!-\!N\right)(1) =
(-1)^{N-1} \left(\f\!-\!N\right) \frac{1}{2^{2N}N!(N\!-\!1)!}
Q_{2N}.
$$
Hence for $\lambda=\f-N$, the master relation \eqref{master-split}
states that
\begin{equation*}
\left(\f\!-\!N\right) \sum_{j=0}^{N-1} (2N\!-\!2j)
\T_{2j}^*\left(\f\!-\!N\right)(v_{2N-2j}) = (-1)^N
\left(\f\!-\!N\right) \frac{4N}{2^{2N}N!(N\!-\!1)!} Q_{2N}.
\end{equation*}
Now assume that $n$ is even and $2N < n$. We divide the latter
relation by $\f-N$ and find
\begin{equation}\label{holo-sub}
\sum_{j=0}^{N-1} (2N\!-\!2j)
\T_{2j}^*\left(\f\!-\!N\right)(v_{2N-2j}) = 4N c_N Q_{2N}
\end{equation}
with
$$
c_N = (-1)^{N} \frac{1}{2^{2N}N!(N\!-\!1)!}.
$$
This proves the assertion in the subcritical case. The same argument
completes the proof in odd dimensions. A formal continuation of
\eqref{holo-sub} to $2N=n$ yields the holographic formula
\begin{equation}\label{holo-crit}
\sum_{j=0}^{\f-1} (n\!-\!2j) \T_{2j}^*(0)(v_{n-2j}) = 2n c_\f Q_n
\end{equation}
for the critical $Q$-curvature $Q_n$ \cite{gj}, \cite{juhl-book}. The
above proof, however does {\em not} work in this case since it
involves a division by $\f-N$. In fact, in the critical case, the
master relation \eqref{master-3} states that
\begin{equation}\label{master-crit}
\lambda \sum_{j=0}^\f (n\!+\!2j) \T_{2j}^*(\lambda)(v_{n-2j}) = 0.
\end{equation}
We have seen above that the left-hand side of the critical master
relation \eqref{master-crit} is regular at $\lambda=0$ and vanishes at
$\lambda = 0$ by trivial reasons. In order to derive
\eqref{holo-crit}, we differentiate \eqref{master-crit} at
$\lambda=0$. Separating the last term, we find
$$
\sum_{j=0}^{\f-1} (n\!+\!2j) \T_{2j}^*(0)(v_{n-2j}) + 2n
(d/d\lambda)|_0(\lambda \T_n^*(\lambda)(1)) = 0.
$$
Now
$$
\lambda \T_n(\lambda) = - \frac{1}{2^n \left(\f\right)!}
\frac{1}{(\f\!-\!\lambda\!-\!1)\cdots(-\lambda\!+\!1)} P_n(\lambda).
$$
Hence
$$
\sum_{j=0}^{\f-1} (n\!+\!2j) \T_{2j}^*(0)(v_{n-2j}) - \frac{2n}{2^n
\left(\f\right)!(\f\!-\!1)!} \dot{P}_n^*(0)(1) = 0.
$$
We combine this result with the identity
$$
n \left(\dot{P}_n^*(0) - \dot{P}_n(0) \right)(1) = 2^n
\left(\f\right)! \left(\f\!-\!1\right)! \sum_{j=0}^{\f-1} 2j
\T_{2j}^*(0)(v_{n-2j})
$$
(see \cite{gj}, Proposition 2 or \cite{juhl-book}, Theorem 6.6.4).
It follows that
$$
\sum_{j=0}^{\f-1} (n\!-\!2j) \T_{2j}^*(0)(v_{n-2j}) = \frac{2n}{2^n
\left(\f\right)!\left(\f\!-\!1\right)!} \dot{P}_n(0)(1).
$$
Now the relation (see \cite{GZ})
\begin{equation}\label{property-2}
\dot{P}_n^*(0) = (-1)^\f Q_n
\end{equation}
implies the holographic formula \eqref{holo-crit}.
\end{proof}

We add some comments.

Eq.~\eqref{Q-sub-holo} expresses the difference
$$
Q_{2N} - (-1)^N 2^{2N-1} N! (N\!-\!1)! v_{2N}
$$
in terms of the lower order constructions $\T_{2j}(n/2-N)$ and
$v_{2j}$ for $j=0,\dots,N\!-\!1$. The same difference can be
expressed also in terms of lower order GJMS-operators and lower
order $Q$-curvatures. For the details we refer to \cite{juhl-power}
and \cite{Q8}.

The vanishing result $\V_n(\lambda)=0$ (see Theorem \ref{V-van})
implies
$$
n \sum_{j=0}^\f \dot{\T}_{2j}^*(0)(v_{n-2j}) + \sum_{j=0}^\f 2j
\dot{\T}_{2j}^*(0)(v_{n-2j}) = 0.
$$
In combination with the identity $\dot{Q}_n^{res}(0) = -(-1)^\f Q_n$
(see \cite{gj} and \cite{juhl-book}) we find
$$
c_\f^{-1} \frac{1}{n} \sum_{j=0}^\f 2j \dot{\T}_{2j}^*(0)(v_{n-2j})
= - \frac{1}{2} \ddot{Q}_n^{res}(0) - \left(\sum_{j=1}^{\f-1}
\frac{1}{k} \right) Q_n.
$$
For an application of the latter relation we refer to Theorem
6.11.15 of \cite{juhl-book} (see also the remarks around (2.6) in
\cite{gj}).

We close this section with a brief discussion of two examples of
Theorem \ref{main}.

\begin{ex} In dimension $n \ge 4$, the holographic formula for
$Q_4$ states that
\begin{equation}\label{holo-Q4}
\frac{1}{4} Q_4 = 4 v_4 + 2 \T_2^* \left(\f\!-\!2\right) (v_2).
\end{equation}
An easy calculation using \eqref{v2}--\eqref{v4} shows that
\eqref{holo-Q4} is equivalent to the familiar expression
$$
Q_4 = \f \J^2 - 2|\Rho|^2 - \Delta \J.
$$
\end{ex}

\begin{ex} In dimension $n \ge 6$, the holographic formula for $Q_6$
states that
\begin{equation}\label{holo-Q6}
-\frac{1}{2^6}  Q_6 = 6 v_6 + 4 \T_2^*\left(\f\!-\!3\right) (v_4) +
2 \T_4^*\left(\f\!-\!3\right) (v_2).
\end{equation}
Explicit formulas for $\T_2(\lambda)$ and $\T_4(\lambda)$ are
displayed in Example \ref{m-4}. For a detailed comparison of
\eqref{holo-Q6} with alternative explicit formulas for $Q_6$ we
refer to \cite{juhl-book} (Theorem 6.10.4).
\end{ex}

\section{Master relations and $Q$-curvature polynomials for round
  spheres}\label{sphere}

In the present section, we prove the master relations for round
spheres $\S^n$ by using hypergeometric identities. More precisely,
we establish \eqref{master-3} by comparing explicit formulas for
both sides of \eqref{master-2}. These results confirm closed
formulas for the $Q$-curvature polynomials found in
\cite{juhl-power} by different methods.

The arguments rest on the following result.

\begin{prop}\label{prop-1} On the round sphere $\S^n$,
\begin{equation}
P_{2N}(\lambda)(1) = P_{2N}^*(\lambda)(1) = (-1)^N \left(\f\right)_N
(\lambda)_N
\end{equation}
for all $N \ge 0$. Here $(x)_N = x(x\!+\!1)\cdots(x\!+\!N\!-\!1)$ is
the usual Pochhammer symbol.
\end{prop}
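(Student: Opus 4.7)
\medskip
\noindent\textbf{Proof plan.}
The plan is to compute $\T_{2N}(\lambda)(1)$ explicitly on $\S^n$ by solving the radial eigenvalue equation \eqref{eigen} for the Poincar\'e-Einstein filling (hyperbolic space), then to convert that formula into one for $P_{2N}(\lambda)(1)$ via \eqref{TP}, and finally to deduce $P_{2N}^*(\lambda)(1)=P_{2N}(\lambda)(1)$ from $O(n\!+\!1)$-symmetry.

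\emph{Step 1: Set-up.} For the round sphere $\S^n$ (of sectional curvature $1$), the Poincar\'e normal form \eqref{PE1}--\eqref{PE2} reduces to the closed expression $h_r=(1-r^2/4)^2 h$ and hence
$v(r)=(1-r^2/4)^n$.
Because the ansatz \eqref{expansion} with $f=1$ produces an asymptotic solution all of whose coefficients $\T_{2N}(\lambda)(1)$ are $O(n\!+\!1)$-invariant, hence constant, I look for an exact eigenfunction of $\Delta_{g_+}$ depending on $r$ only.

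\emph{Step 2: Radial equation.} For $u=u(r)$ a short calculation using $g_+=r^{-2}(dr^2+h_r)$ gives
\[
\Delta_{g_+}u=r^2 u''-(n-1)\,r\,u'+r^2\frac{v'(r)}{v(r)}\,u',\qquad \frac{v'(r)}{v(r)}=-\frac{nr/2}{1-r^2/4}.
\]
Substituting $u(r)=r^{\lambda}w(t)$ with $t=r^2/4$ into $-\Delta_{g_+}u=\lambda(n\!-\!\lambda)u$, the $r^{\lambda}w$ terms cancel and after clearing $(1-t)$ one obtains the Gauss hypergeometric equation
\[
t(1-t)\,w''+\bigl[(\lambda-\tfrac{n}{2}+1)-(\lambda+\tfrac{n}{2}+1)t\bigr]w'-\tfrac{n\lambda}{2}\,w=0,
\]
with parameters $a=\tfrac{n}{2}$, $b=\lambda$, $c=\lambda-\tfrac{n}{2}+1$. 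The unique solution regular at $t=0$ with $w(0)=1$ is
\[
w(t)={}_2F_1\bigl(\tfrac{n}{2},\lambda;\lambda-\tfrac{n}{2}+1;t\bigr).
\]

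\emph{Step 3: Reading off the formula.} Expanding the hypergeometric series in $r^2$ identifies
\[
\T_{2N}(\lambda)(1)=\frac{(\tfrac{n}{2})_N(\lambda)_N}{(\lambda-\tfrac{n}{2}+1)_N\,N!\,4^N}.
\]
Since $(\lambda-\tfrac{n}{2}+1)_N=(-1)^N(\tfrac{n}{2}-\lambda-1)\cdots(\tfrac{n}{2}-\lambda-N)$, the factorization \eqref{TP} yields, after the $4^N$ and $(-1)^N$ cancel against $2^{2N}$,
\[
P_{2N}(\lambda)(1)=2^{2N}N!(\tfrac{n}{2}-\lambda-1)\cdots(\tfrac{n}{2}-\lambda-N)\,\T_{2N}(\lambda)(1)=(-1)^N(\tfrac{n}{2})_N(\lambda)_N.
\]

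\emph{Step 4: Adjoint.} Since $P_{2N}(\lambda)$ is natural and $\S^n$ is homogeneous under $O(n\!+\!1)$, both $P_{2N}(\lambda)(1)$ and $P_{2N}^*(\lambda)(1)$ are $O(n\!+\!1)$-invariant, hence constant. Pairing $P_{2N}(\lambda)(1)$ with the constant test function $1$ gives
\[
\vol(\S^n)\cdot P_{2N}(\lambda)(1)=\int_{\S^n}P_{2N}(\lambda)(1)\,\vol(h)=\int_{\S^n}P_{2N}^*(\lambda)(1)\,\vol(h)=\vol(\S^n)\cdot P_{2N}^*(\lambda)(1),
\]
so the two constants coincide.

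\emph{Main obstacle.} No conceptual difficulty is expected; the whole argument rests on correctly recognizing that the radial eigenvalue ODE for the hyperbolic Laplacian in the Poincar\'e normal form, after the substitutions $u=r^{\lambda}w$ and $t=r^2/4$, is Gauss hypergeometric with exactly the parameters $(\tfrac{n}{2},\lambda;\lambda-\tfrac{n}{2}+1)$. That identification, which I sketched above, is the technical heart of the proof; everything else is manipulation of Pochhammer symbols and the self-adjointness argument of Step 4.
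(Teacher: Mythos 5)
Your proof is correct, and it reaches the paper's key intermediate identity \eqref{claim}, $\T_{2N}(\lambda)(1)=\tfrac{(\f)_N(\lambda)_N}{2^{2N}N!\,(\lambda-\f+1)_N}$, by a genuinely more direct route. The paper starts from the classical closed-form radial eigenfunctions on the ball model $\BB^n$ (whose conformal infinity is $S^{n-1}$) and must then invoke the ${}_2F_1$ connection formula at $1-x$ together with a quadratic transformation to pass to the normal-form coordinate $s=(1-r)/(1+r)$, finally shifting $n$ by one to land on $\S^n$. You instead derive the radial eigenvalue ODE directly in the normal form $r^{-2}(dr^2+(1-r^2/4)^2 g_{\S^n})$ and observe that $u=r^\lambda w(r^2/4)$ turns it into the Gauss hypergeometric equation; I checked the reduction and your parameters are right ($a+b=\lambda+\f$, $ab=n\lambda/2$, $c=\lambda-\f+1$), so the coefficient extraction and the conversion via \eqref{TP} go through. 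This avoids both transformation identities and the dimension shift, and makes the generating function \eqref{claim-3} transparent; the one point worth stating explicitly is that the formal coefficients $\T_{2N}(\lambda)(1)$ agree with the Taylor coefficients of the hypergeometric solution because both satisfy the same recursion with the same initial datum, for generic $\lambda$ and hence as rational functions. Your adjoint step also differs: the paper notes that on $\S^n$ the $P_{2N}(\lambda)$ are polynomials in the Laplacian, hence self-adjoint as operators, whereas you prove only the weaker (but sufficient) equality of the two constants by pairing against $1$; both are valid.
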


\begin{proof} On the round sphere $\S^n$, the operators $P_{2N}(\lambda)$
are polynomials in the Laplacian. In particular, they are
self-adjoint. Now, by the definitions, the assertion is equivalent
to
\begin{equation}\label{claim}
\T_{2N}(\lambda)(1) = \frac{(\f)_N}{2^{2N} N!}
\frac{(\lambda)_N}{(\lambda\!-\!\f\!+\!1)_N}
\end{equation}
for all $N \ge 0$. In fact, it will be more convenient to prove the
equivalent assertion that
\begin{equation}\label{claim-2}
\T_{2N}\left(\frac{1}{4}g_c,\lambda\right)(1) = \frac{(\f)_N}{N!}
\frac{(\lambda)_N}{(\lambda\!-\!\f\!+\!1)_N}
\end{equation}
for all $N \ge 0$. Note that \eqref{claim-2} is equivalent to
\begin{equation}\label{claim-3}
\sum_{N \ge 0} \T_{2N} \left(\frac{1}{4} g_c;\lambda \right)(1)
s^{\lambda + 2N} = s^\lambda {_2F_1}
\left(\f,\lambda;\lambda\!-\!\f\!+\!1;s^2\right),
\end{equation}
where
$$
{_2F_1}(a,b;c;x) = \sum_{n \ge 0} \frac{(a)_n(b)_n}{(c)_n}
\frac{x^n}{n!}
$$
is the Gau{\ss} hypergeometric function. In the following, we shall
derive \eqref{claim-3} from the well-known fact that the radial
eigenfunctions of the Laplacian of the hyperbolic metric
$$
\frac{4}{(1-|x|^2)^2} \sum_{i=1}^n dx_i^2
$$
on the unit ball $\BB^n$ with boundary $S^{n-1}$ are constant
multiples of
$$
u(r) = (1\!-\!r^2)^\lambda {_2F_1}
\left(\lambda,\lambda\!-\!\f\!+\!1;\f;r^2\right)
$$
(see \cite{BJ}, Section 1.4). The substitution
$$
s = \frac{1-r}{1+r}, \; r = |x|
$$
brings the hyperbolic metric into the form
$$
s^{-2} \left(ds^2 + \frac{1}{4} (1\!-\!s^2)^2 g_c \right).
$$
Let
$$
v(s) = u\left(\frac{1-s}{1+s}\right).
$$
We use the identity (see eq.~(1) in Section 2.10 of \cite{BE})
\begin{multline*}
{_2F_1}(a,b;c;x) = \alpha {_2F_1}(a,b;a+b-c+1;1-x) \\ + \beta
(1-x)^{c-a-b} {_2F_1}(c-a,c-b;c-a-b+1;1-x)
\end{multline*}
with
$$
\alpha = \frac{\Gamma(c)\Gamma(c-a-b)}{\Gamma(c-a)\Gamma(c-b)} \quad
\mbox{and} \quad \beta =
\frac{\Gamma(c)\Gamma(a+b-c)}{\Gamma(a)\Gamma(b)}
$$
to write $u$ in the form
\begin{multline*}
u(r) = A (1\!-\!r^2)^\lambda {_2F_1}
\left(\lambda,\lambda\!-\!\f\!+\!1;2\lambda\!-\!n\!+\!2;1\!-\!r^2\right) \\
+ B (1\!-\!r^2)^{n-1-\lambda} {_2F_1}
\left(\f\!-\!\lambda,n\!-\!\lambda\!-\!1;n\!-\!2\lambda;1\!-\!r^2\right)
\end{multline*}
with
$$
A = \frac{\Gamma(\f)
\Gamma(n\!-\!2\lambda\!-\!1)}{\Gamma(\f\!-\!\lambda)
\Gamma(n\!-\!\lambda\!-\!1)} \quad \mbox{and} \quad B =
\frac{\Gamma(\f)\Gamma(2\lambda\!-\!n\!+\!1)}{\Gamma(\lambda)\Gamma(\lambda\!-\!\f\!+\!1)}.
$$
Now we substitute
$$
r = \frac{1-s}{1+s}.
$$
The relation (see (3.1.11) in \cite{AAR})
$$
{_2F_1} \left(a,b;2b;\frac{4x}{(1+x)^2}\right) = (1+x)^{2a} {_2F_1}
\left(a,a+\frac{1}{2}-b;b+\frac{1}{2};x^2\right)
$$
implies that
$$
{_2F_1} \left
(\lambda,\lambda\!-\!\f\!+\!1;2\lambda\!-\!n\!+\!2;\frac{4s}{(1\!+\!s)^2}\right)
= (1+s)^{2\lambda} {_2F_1} \left
(\lambda,\frac{n\!-\!1}{2};\lambda\!-\!\frac{n\!-\!3}{2};s^2\right)
$$
and \begin{multline*}
{_2F_1}\left(\f\!-\!\lambda,n\!-\!\lambda\!-\!1;n\!-\!2\lambda;\frac{4s}{(1\!+\!s)^2}\right)
\\ = (1+s)^{2n-2-2\lambda} {_2F_1}
\left(\frac{n\!-\!1}{2},n\!-\!\lambda\!-\!1;\frac{n\!+\!1}{2}\!-\!\lambda;s^2\right).
\end{multline*}
It follows that
\begin{multline*}
v(s) = A (4s)^\lambda {_2F_1}
\left(\lambda,\frac{n\!-\!1}{2};\lambda\!-\!\frac{n\!-\!3}{2};s^2\right)
\\ + B (4s)^{n-1-\lambda} {_2F_1}
\left(\frac{n\!-\!1}{2},n\!-\!\lambda\!-\!1;\frac{n\!+\!1}{2}\!-\!\lambda;s^2\right).
\end{multline*}
But the first term in this sum is a multiple of the right-hand side
of \eqref{claim-3} (after shifting $n$ by one). This completes the
proof. \end{proof}

Next, we calculate the sum on the right-hand side of
\eqref{master-2}.

\begin{prop}\label{prop-2} On the round sphere $\S^n$,
\begin{equation}\label{sum-1}
\sum_{j=0}^N \T_{2j}^*(\lambda)(v_{2N-2j}) = (-1)^N
\frac{(\f)_N}{2^{2N} N!} \frac{(\lambda\!-\!n\!+\!2N)
(\lambda\!-\!n\!+\!1)_{N-1}}{ (\lambda\!-\!\f\!+\!1)_N}.
\end{equation}
\end{prop}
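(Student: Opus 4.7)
The plan is a direct coefficient extraction from a generating function that combines Proposition \ref{prop-1} with the explicit shape of $v(s)$ on the round sphere. First I would observe that on the sphere every curvature invariant is parallel, so the holographic coefficients $v_{2k}$ are constants, and each $P_{2N}(\lambda)$, being a polynomial in $\Delta$, is self-adjoint; in particular $\T_{2j}^*(\lambda) = \T_{2j}(\lambda)$ and $\T_{2j}^*(\lambda)(v_{2N-2j}) = v_{2N-2j}\,\T_{2j}(\lambda)(1)$. From the Poincar\'e--Einstein representation used in the proof of Proposition \ref{prop-1} (where $h = \tfrac14 g_c$ and $h_s = (1-s^2)^2 h$) one reads off $v(s) = (1-s^2)^n$, so $v_{2k} = (-1)^k \binom{n}{k}$ in this normalization. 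Establishing \eqref{sum-1} is thereby reduced to an identity between two fully explicit rational functions of $\lambda$.

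Next I would recognize the sum over $j$ as the $s^{2N}$-coefficient of a Cauchy product and substitute the explicit generating function \eqref{claim-3}, obtaining
\[
\sum_{N \ge 0} s^{2N} \sum_{j=0}^N v_{2N-2j}\,\T_{2j}(\tfrac14 g_c;\lambda)(1) = (1-s^2)^n\cdot {}_2F_1\!\bigl(\f,\lambda;\lambda-\f+1;s^2\bigr).
\]
The crucial move is Euler's transformation ${}_2F_1(a,b;c;x) = (1-x)^{c-a-b}\,{}_2F_1(c-a,c-b;c;x)$ applied with $(a,b,c) = (\f,\lambda,\lambda-\f+1)$: since $c-a-b = 1-n$, the factor $(1-s^2)^n$ cancels against $(1-s^2)^{1-n}$, and the generating function collapses to the remarkably compact expression
\[
(1-s^2)\,{}_2F_1\!\bigl(\lambda-n+1,\,1-\f;\,\lambda-\f+1;\,s^2\bigr).
\]

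The final step is to read off $[s^{2N}]$. Writing $G(x) = {}_2F_1(\lambda-n+1,1-\f;\lambda-\f+1;x)$, one has $[s^{2N}](1-s^2)\,G(s^2) = [x^N]G(x) - [x^{N-1}]G(x)$, a difference of two Pochhammer ratios. Factoring out the common prefactor and simplifying the resulting bracket by the elementary identity
\[
(\lambda-n+N)(N-\f) - N(\lambda-\f+N) = -\f(\lambda-n+2N),
\]
which relies on $n = 2\f$, together with the telescoping relation $(\lambda-\f+1)_{N-1}(\lambda-\f+N)=(\lambda-\f+1)_N$, assembles a single closed form. The scaling relations $\T_{2j}(c^2 h;\lambda)(1) = c^{-2j}\T_{2j}(h;\lambda)(1)$ and $v_{2k}(c^2 h) = c^{-2k}v_{2k}(h)$ convert the $\tfrac14 g_c$-computation to the $g_c$-statement in \eqref{sum-1} and produce the factor $2^{-2N}$. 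I expect the only genuine obstacle to be precisely this final bookkeeping -- the Pochhammer manipulations and the passage between normalizations; the conceptual input is entirely in Proposition \ref{prop-1} and Euler's transformation.
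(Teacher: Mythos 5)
Your argument is correct, and it takes a genuinely different route from the paper's. Both proofs begin the same way: the $v_{2k}$ are constants, the operators are self-adjoint on $\S^n$, $v(s)=(1-s^2)^n$ in the $\tfrac14 g_c$ normalization, and the claim reduces to a purely numerical identity in $\lambda$. But where the paper rewrites the resulting finite sum as a terminating $2$-balanced ${}_3F_2$ at $1$ and evaluates it via Sheppard's transformation \eqref{shep}, you stay at the level of the ${}_2F_1$ generating function \eqref{claim-3} and apply Euler's transformation, under which the convolution with $(1-x)^n$ collapses to multiplication by the single factor $(1-x)$; extracting $[x^N]$ is then a two-term difference, and your identity $(\lambda-n+N)(N-\f)-N(\lambda-\f+N)=-\f(\lambda-n+2N)$ checks out. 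Your route is more elementary (no ${}_3F_2$ machinery) and makes the linear factor $\lambda-n+2N$ transparent; the paper's route has the advantage of fitting into the same balanced-series framework used for the second sum in \eqref{master-3} (the Pfaff--Saalsch\"utz step). One point to be aware of when you carry out the final bookkeeping: your computation produces the constant $(-1)^N\,\f(\f-1)\cdots(\f-N+1)/(2^{2N}N!)$, i.e.\ the \emph{falling} factorial of $\f$, whereas \eqref{sum-1} displays the rising Pochhammer symbol $(\f)_N=\f(\f+1)\cdots(\f+N-1)$ as defined in Proposition \ref{prop-1}. This is not a defect of your method: the prefactor delivered by Sheppard's formula in the paper's own proof is $(\f-N+1)_N=\f(\f-1)\cdots(\f-N+1)$, Corollary \ref{corr-1} is stated with the product $\prod_{j=0}^{N-1}(\f-j)$, and a direct check at $n=6$, $N=2$ (where the two candidate constants are $6$ and $12$) confirms the falling factorial. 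So your proof establishes the intended statement; the symbol $(\f)_N$ in \eqref{sum-1} must be read as $\f(\f-1)\cdots(\f-N+1)$.
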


\begin{proof} The formula
$$
g_+ = r^{-2} \left(dr^2 + \left(1-r^2/4 \right)^2 g_{S^n} \right)
$$
implies $v(r) = (1-r^2/4)^n$. Thus, the Taylor coefficients of
$v(r)$ are given by
\begin{equation}\label{hol-c}
v_{2N} = (-1)^N \frac{1}{2^{2N}} \binom{n}{N}.
\end{equation}
But since the holographic coefficients are constant, the left-hand
side of \eqref{sum-1} equals
\begin{equation}\label{sum-2}
\sum_{j=0}^N \T_{2j}^*(\lambda)(1) v_{2N-2j}.
\end{equation}
Thus, by Proposition \ref{prop-1} it suffices to verify that
\begin{equation*}
\sum_{j=0}^N \frac{(\f)_j (\lambda)_j}{2^{2j} j!
(\lambda\!-\!\f\!+\!1)_j} (-1)^{N-j} \frac{1}{2^{2N-2j}}
\binom{n}{N\!-\!j} = (-1)^N \frac{(\f)_N}{2^{2N} N!}
\frac{(\lambda\!-\!n\!+\!2N)(\lambda\!-\!n\!+\!1)}{(\lambda\!-\!\f\!+\!1)_N},
\end{equation*}
i.e.,
\begin{equation}\label{claim-red}
\sum_{j=0}^N \binom{n}{N\!-\!j} (-1)^j
\frac{(\f)_j(\lambda)_j}{(\lambda\!-\!\f\!+\!1)_j} \frac{1}{j!} =
\frac{(\f)_N}{N!} \frac{(\lambda\!-\!n\!+\!2N)
(\lambda\!-\!n\!+\!1)_{N-1}}{(\lambda\!-\!\f\!+\!1)_N}.
\end{equation}
For the proof of the summation formula \eqref{claim-red} we write
the left-hand side in hypergeometric notation:
\begin{multline*}
\binom{n}{N} \sum_{j=0}^N \frac{(\f)_N
(\lambda)_j}{(\lambda\!-\!\f\!+\!j)_j}
\frac{(-N)_j}{(n\!-\!N\!+\!1)_j} \frac{1}{j!} = \binom{n}{N} {_3F_2}
\left(\f,\lambda,-N;\lambda\!-\!\f\!+\!1,n\!-\!N\!+\!1;1\right).
\end{multline*}
In view of
$$
\f\!+\!\lambda\!-\!N\!+\!2 = \left(\lambda\!-\!\f\!+\!1\right) +
(n\!-\!N\!+\!1),
$$
this is a $2$-balanced hypergeometric sum. We evaluate this sum by
using Sheppard's formula (see Corollary 3.3.4 in \cite{AAR})
\begin{multline}\label{shep}
{_3F_2}(-n,a,b;d,e;1) = \frac{(d-a)_n(e-a)_n}{(d)_n(e)_n} \\
\times {_3F_2}(-n,a,a+b-n-d-e+1;a-n-d+1,a-n-e+1;1).
\end{multline}
We find
\begin{multline*}
\binom{n}{N} {_{3}F_2}
\left(-N,\f,\lambda;\lambda\!-\!\f\!+\!1,n\!-\!N\!+\!1;1\right)
\\ = \binom{n}{N} \frac{(\lambda\!-\!n\!+\!1)_N (\f\!-\!N\!+\!1)_N}{(\lambda\!-\!\f\!+\!1)_N
(n\!-\!N\!+\!1)_N} {_{3}F_2} \left(-N,\f,-1;n\!-\!N\!-\!\lambda,-\f;1\right) \\
= \frac{(\f)_N}{N!}
\frac{(\lambda\!-\!n\!+\!2N)(\lambda\!-\!n\!+\!1)_{N-1}}{(\lambda\!-\!\f\!+\!1)_N}.
\end{multline*}
by using
$$
{_{3}F_2} \left(-N,\f,-1;n\!-\!N\!-\!\lambda,-\f;1\right) = 1 +
\frac{N}{\lambda\!-\!n\!+\!N}  =
\frac{\lambda\!-\!n\!+\!2N}{\lambda\!-\!n\!+\!N}.
$$
The proof is complete. \end{proof}

Proposition \ref{prop-2} implies an explicit formula for the
$Q$-curvature polynomial \eqref{Q-pol}.

\begin{corr}\label{corr-1} On the round sphere $\S^n$,
$$
Q^{res}_{2N}(\lambda) = (-1)^{N-1} \prod_{j=0}^{N-1} \left(\f\!-\!j\right)
\lambda \prod_{j=1}^{N-1} (\lambda\!-\!N\!-\!j).
$$
\end{corr}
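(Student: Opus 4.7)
The plan is to substitute the explicit formula from Proposition \ref{prop-2} into the definition \eqref{Q-pol} of $Q^{res}_{2N}(\lambda)$ and to collect the factors. All the substantive work has already been done in Propositions \ref{prop-1} and \ref{prop-2}, so what remains is essentially bookkeeping.

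First, I would evaluate the right-hand side of Proposition \ref{prop-2} at the shifted argument $\lambda + n - 2N$ in place of $\lambda$. Using $n = 2\f$, the three $\lambda$-dependent factors transform cleanly: $(\lambda + n - 2N) - n + 2N = \lambda$, which provides the bare factor $\lambda$ appearing in the corollary; $((\lambda + n - 2N) - n + 1)_{N-1} = (\lambda - 2N + 1)_{N-1}$, which will become the linear-factor product; and $((\lambda + n - 2N) - \f + 1)_N = (\lambda + \f - 2N + 1)_N$, which is exactly the Pochhammer prefactor sitting in front of the sum in \eqref{Q-pol}.

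Next, I would multiply by the prefactor $-2^{2N} N! \left(\lambda + \f - 2N + 1\right)_N$ dictated by \eqref{Q-pol}. The constants $2^{2N} N!$ cancel against the identical denominator factor from Proposition \ref{prop-2}, and the Pochhammer prefactor cancels the denominator $(\lambda + \f - 2N + 1)_N$; the combined sign becomes $-(-1)^N = (-1)^{N-1}$. Finally, I would reindex $(\lambda - 2N + 1)_{N-1} = (\lambda - 2N + 1)(\lambda - 2N + 2) \cdots (\lambda - N - 1) = \prod_{j=1}^{N-1}(\lambda - N - j)$ via the substitution $k = N + j$, and identify the remaining $\f$-dependent Pochhammer factor with $\prod_{j=0}^{N-1}(\f - j)$.

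I anticipate no real obstacle in this reduction beyond careful verification of signs and indexing: the only point requiring attention is matching the Pochhammer factor in $\f$ that arises from Proposition \ref{prop-2} with the product $\prod_{j=0}^{N-1}(\f - j)$ of the corollary (equivalently, $(\f - N + 1)_N$). As a sanity check I would verify the $N = 1$ case, where the formula reduces to $Q^{res}_2(\lambda) = \f \cdot \lambda$, in agreement with the direct evaluation using \eqref{v2}, \eqref{T2}, and $\J = \f$ on $\S^n$.
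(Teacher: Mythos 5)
Your route is exactly the paper's: the entire proof consists of inserting Proposition \ref{prop-2} into the definition \eqref{Q-pol} at the shifted argument and cancelling, and your handling of the $\lambda$-dependent factors, of the constants $2^{2N}N!$, of the sign $-(-1)^N=(-1)^{N-1}$, and of the reindexing $(\lambda-2N+1)_{N-1}=\prod_{j=1}^{N-1}(\lambda-N-j)$ is all correct.

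The one step you defer to ``careful verification,'' however, is not mere bookkeeping: the factor $(\f)_N=\f(\f+1)\cdots(\f+N-1)$ appearing in the displayed formula \eqref{sum-1} is \emph{not} equal to $\prod_{j=0}^{N-1}(\f-j)=(\f-N+1)_N$ once $N\ge 2$, so the two cannot be ``identified.'' Taken literally, your substitution yields $(-1)^{N-1}(\f)_N\,\lambda\prod_{j=1}^{N-1}(\lambda-N-j)$, which disagrees with the Corollary. The resolution is that the statement of Proposition \ref{prop-2} carries a misprint: its own proof produces the prefactor
$$
\binom{n}{N}\,\frac{(\f\!-\!N\!+\!1)_N}{(n\!-\!N\!+\!1)_N}=\frac{(\f\!-\!N\!+\!1)_N}{N!},
$$
i.e.\ the \emph{falling} factorial $\f(\f-1)\cdots(\f-N+1)$, and only the last line of that proof miscopies $(\f-N+1)_N$ as $(\f)_N$. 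A direct check confirms the falling factorial is the correct one: for $n=6$, $N=2$, $\lambda=0$ the left-hand side of \eqref{sum-1} reduces to $v_4=\frac{1}{16}\binom{6}{2}=\frac{15}{16}$ (since $\T_{2j}(0)(1)\propto(0)_j=0$ for $j\ge1$), which matches $\frac{\f(\f-1)}{32}\cdot\frac{(-2)(-5)}{(-2)(-1)}=\frac{15}{16}$ but not $\frac{(\f)_2}{32}\cdot\frac{(-2)(-5)}{(-2)(-1)}=\frac{15}{8}$. With $(\f)_N$ read as $(\f-N+1)_N$ in \eqref{sum-1}, your computation goes through verbatim and the Corollary as stated is correct. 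Note that your $N=1$ sanity check cannot detect this issue, since the rising and falling factorials coincide for $N=1$.
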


\begin{proof} Proposition \ref{prop-2} yields
$$
Q_{2N}^{res}(\lambda\!-\!n\!+\!2N) = (-1)^{N-1} \left(\f\right)_N
(\lambda\!-\!n\!+\!2N) (\lambda\!-\!n\!+\!1)_{N-1}.
$$
This formula implies the assertion.
\end{proof}

Corollary \ref{corr-1} was derived in \cite{juhl-power} (see Lemma 9.2)
by using the factorization identities for $Q$-curvature polynomials.

Next, we calculate the sum on the left-hand side of
\eqref{master-2}. By \eqref{claim}, we obtain
$$
\sum_{j=0}^N j \T_{2j}^*(\lambda)(v_{2N-2j}) = \frac{(-1)^N}{2^{2N}}
\binom{n}{N} \sum_{j=0}^N \frac{(\f)_N (\lambda)_j}
{(\lambda\!-\!\f\!+\!j)_j} \frac{(-N)_j}{(n\!-\!N\!+\!1)_j} j
\frac{1}{j!}.
$$
In terms of hypergeometric notation this sum equals
\begin{multline*}
\frac{(-1)^N}{2^{2N}} \binom{n}{N} \frac{\f \lambda (-N)}{(\lambda-\f+1)(n-N+1)} \\
\times
{_{3}F_2}\left(\f+1,\lambda+1,-N+1;\lambda\!-\!\f\!+\!2,n\!-\!N\!+\!2;1\right).
\end{multline*}
The ${_3F_2}$ is a $1$-balanced hypergeometric sum. By the formula
of Pfaff-Saalsch\"{u}tz (see Theorem 2.2.6 in \cite{AAR}) or
\eqref{shep}, we find
$$
{_{3}F_2}\left(\f+1,\lambda+1,-N+1;\lambda\!-\!\f\!+\!2,n\!-\!N\!+\!2;1\right)
= \frac{(\lambda\!-\!n\!+\!1)_{N-1}
(-\f\!+\!1)_{N-1}}{(\lambda\!-\!\f\!+\!2)_{N-1}(-n)_{N-1}}.
$$
Hence
\begin{align*}
\sum_{j=0}^N j \T_{2j}^*(\lambda)(v_{2N-2j}) & =
\frac{(-1)^{N-1}}{2^{2N}} \binom{n}{N} \frac{\f N}{n\!-\!N\!+\!1}
\frac{(-\f\!+\!1)_{N-1}}{(-n)_{N-1}}
\frac{\lambda(\lambda\!-\!n\!+\!1)_{N-1}}{(\lambda\!-\!\f\!+\!1)_N} \\
& = \frac{(-1)^{N-1}}{2^{2N}} \frac{(\f)_N}{(N\!-\!1)!} \lambda
\frac{(\lambda\!-\!n\!+\!1)_{N-1}}{(\lambda\!-\!\f\!+\!1)_N}.
\end{align*}
It follows that the second sum in \eqref{master-3} equals
$$
(-1)^{N-1} \frac{(\f)_N}{2^{2N}(N\!-\!1)!} \lambda
(\lambda\!-\!n\!-\!2N)
\frac{(\lambda\!-\!n\!+\!1)_{N-1}}{(\lambda\!-\!\f\!+\!1)_N}.
$$
On the other hand, by Proposition \ref{prop-2}, the first sum in
\eqref{master-3} equals
$$
(-1)^N \frac{(\f)_N}{2^{2N} (N\!-\!1)!} \lambda
(\lambda\!-\!n\!+\!2N) \frac{(\lambda\!-\!n\!+\!1)_{N-1}}{
(\lambda\!-\!\f\!+\!1)_N}.
$$
This completes the proof of \eqref{master-3} for $S^n$.

Finally, we note that the discussion in Section 7.9 of
\cite{FG-final} yields explicit formulas for the families
$P_{2N}(\lambda)$ on $\S^n$.\footnote{The author is grateful to R.
Graham for pointing this out.} These formulas can be used to give an
alternative proof of Proposition \ref{prop-1}.


\end{document}